\documentclass[11pt]{article}



\usepackage{amsthm,amsmath,amssymb}

\usepackage{graphicx}

\usepackage[colorlinks=true,citecolor=black,linkcolor=black,urlcolor=blue]{hyperref}


\usepackage{stmaryrd}
\usepackage{textcomp}
\usepackage{enumerate}
\usepackage{setspace}
\usepackage{amssymb}
\usepackage{amsthm}
\usepackage{amsmath}
\usepackage{graphicx}
\usepackage{extarrows}
\usepackage{marvosym}
\usepackage{empheq}
\usepackage{latexsym}
\usepackage{endnotes}
\usepackage{fontenc}

\newtheorem{theorem}{Theorem}
\newtheorem{lemma}[theorem]{Lemma}

\theoremstyle{definition}

\newtheorem*{remark*}{Remark}

\def\PP{{\mathbb P}}

\renewcommand{\geq}{\geqslant}

\renewcommand{\leq}{\leqslant}


\newcommand\eps{\varepsilon}

\hsize=7in

\vsize=9in

\oddsidemargin  0pt     

\evensidemargin 0pt     

\marginparwidth 40pt    

\marginparsep 10pt      



\topmargin 0pt           


\headsep 10pt            


\textheight 8.4in        


\textwidth 6.5in         

\topmargin 0pt

\headsep 10pt

\bibliographystyle{plain}

\begin{document}
\author{Shoham Letzter\thanks{Department of Pure Mathematics and Mathematical Statistics, 
	Wilberforce Road,
	CB3 0WB Cambridge, 
	UK; 
	e-mail: \texttt{s.letzter}@\texttt{dpmms.cam.ac.uk}.
	}
	\and
	Julian Sahasrabudhe\thanks{Department of Mathematics, 
	University of Memphis, 
	Memphis Tennessee, 
	USA;
	e-mail: \texttt{julian.sahasra}@\texttt{gmail.com}.
	}
}
	
\title{On Existentially Complete Triangle-free Graphs}
\maketitle

\newcommand{\Addresses}{{
\bigskip
\footnotesize
\textsc{ Julian Sahasrabudhe, Department of Mathematics, University of Memphis, Memphis Tennessee, USA}\par\nopagebreak
 \texttt{julian.sahasra@gmail.com}
}}

\begin{abstract}
For a positive integer $k$, we say that a graph is $k$-existentially complete if for every $0 \leq a \leq k$, and every tuple of distinct vertices $x_1,\ldots,x_a$, $y_1,\ldots,y_{k-a}$, there exists a vertex $z$ that is joined to all of the vertices $x_1,\ldots,x_a$ and none of the vertices $y_1,\ldots,y_{k-a}$.  While it is easy to show that the binomial random graph $G_{n,1/2}$ satisfies this property with high probability for $k \sim c\log n$, little is known about the ``triangle-free'' version of this problem; does there exist a finite triangle-free graph $G$ with a similar ``extension property''. This question was first raised by Cherlin in 1993 and remains open even in the case $k=4$. 

We show that there are no $k$-existentially complete triangle-free graphs with $k >\frac{8\log n}{\log\log n}$, thus giving the first non-trivial, non-existence result on this ``old chestnut'' of Cherlin. We believe that this result breaks through a natural barrier in our understanding of the problem. 
\end{abstract}

\section{Introduction}

We define an \emph{embedding} of a graph $H$ \emph{into} a graph $G$ to be a graph isomorphism from $H$ to an induced subgraph of $G$.\footnote{ Often this is called an \emph{isomorphic embedding}, but as all embeddings that we consider will be isomorphic embeddings, we drop the extra qualifier.} Let a \emph{partial embedding} of a graph $H$ \emph{into} $G$ be a graph isomorphism between an induced subgraph of $H$ and an induced subgraph of $G$. Say that an embedding $\tilde{\theta} :H \rightarrow G$ \emph{extends} a partial embedding $\theta$ of $H$ into $G$ if $\tilde{\theta}$ is identically $\theta$, when $\tilde{\theta}$ is restricted to the domain of $\theta$. 

The Rado graph $R$, an interesting and important object in its own right \cite{camRand,EHP,OrientedRado,Rado}, is the unique countable graph, up to isomorphism, with the property that every partial embedding of a countable graph $G$ may be extended to a complete embedding of $G$. Interestingly, there exist many examples of finite graphs that ``approximate'' the Rado graph in this sense. For example, the binomial random graph $G_{n,1/2}$ has the property that every partial embedding of a graph on $c\log n$ vertices extends to a complete embedding. 

It is not hard to see that there is a natural analogue of the Rado graph for the class of triangle-free graphs. That is, there is a \emph{countable}, triangle-free graph $G$ so that every partial embedding of an arbitrary, countable, triangle-free graph $H$ extends to a complete embedding of $H$. The 
question of whether there exist \emph{finite} graphs with a similar properly was raised and studied by Cherlin \cite{Cher1, Cher2} in the context of logic and model theory and has recently made its way over to combinatorics by way of Even-Zohar and Linial \cite{Linial}. This is the question that we pursue in this paper. 

To be more precise, let us fix some terms. We call a graph \emph{$k$-existentially complete} if every partial embedding of a graph on $k+1$ vertices extends to a complete embedding. We call a graph $k$-\emph{existentially complete triangle-free} (Henceforth $k$-ECTF) if $G$ is triangle-free and every partial embedding of a triangle-free, $(k+1)$-vertex graph extends to a complete embedding. Equivalently, a graph is $k$-ECTF if for every $0 \leq a \leq k$ and every tuple of distinct vertices $x_1,\ldots,x_a$, $y_1,\ldots,y_{k-a}$ there exists a vertex adjacent to all of $x_1,\ldots,x_a$ and none of $y_1,\ldots,y_{k-a}$, \emph{provided} $x_1,\ldots,x_a$ form an independent set.  

In 1993, Cherlin asked \cite{Cher1} if there exist finite $k$-ECTF graphs for every fixed $k \in \mathbb{N}$. To date this problem remains poorly understood \cite{Cher2} and the state-of-the-art can be summarized as follows. The case $k=1$ is trivial; a graph is $2$-ECTF if and only if it is maximal triangle-free, twin-free and not a $C_5$ or a single edge; there are various (non-trivial) constructions for $3$-ECTF graphs \cite{Pach,Cher1,Cher2,Linial}; and the case $k = 4$ is open. 

Our belief is along the lines of Even-Zohar and Linial, who have conjectured that no such graphs exist for $k \geq k_0$, where $k_0 \in \mathbb{N}$. In the present paper we take a step towards this conjecture by giving a non-trivial restriction on the maximum possible value of $k$, relative to $n$. Let $f(n)$ be the largest integer $k$ for which there exists a $k$-ECTF graph on $n$ vertices. While an easy argument gives that $f(n) \leq c\log n $, our main result gives an asymptotic improvement over this estimate, thereby giving a first non-trivial restriction on $f(n)$.

\begin{theorem} \label{thm:main}
Let $n \in \mathbb{N}$ be sufficiently large. There do not exist $k$-ECTF graphs on $n$ vertices, with $k > \frac{8\log n }{\log \log n}$. That is, $f(n) = O\left(\frac{\log n}{\log\log n}\right)$. 
\end{theorem}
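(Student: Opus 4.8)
The plan is to wring strong structural constraints out of the extension property and then play them against the sparsity forced by triangle-freeness, via a counting argument that replaces the trivial bound $n \ge 2^{\min(k,\alpha(G))}$ (which only gives $f(n)=O(\log n)$) by something of the form $n \ge (\alpha(G)/\Delta(G))^{k}$.

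\textbf{Step 1 (basic structure).} First I would record a few easy consequences of the definition for a $k$-ECTF graph $G$ on $n$ vertices, $k\ge 2$. Applying the $a=1$ case shows $\delta(G)\ge k$. Applying the $a=2$ case to a non-adjacent pair shows any two non-adjacent vertices have a common neighbour, so $G$ is maximal triangle-free and $\diam(G)\le 2$; this already forces $n\le 1+\Delta^{2}$ where $\Delta:=\Delta(G)$, hence $\Delta\ge\sqrt{n-1}$. Finally, using the $a=0$ case with one $y_i$ frozen to a vertex $v$ shows that, for every $v$, the graph induced on $V\setminus(N(v)\cup\{v\})$ is $(k-1)$-ECTF (and more generally the common non-neighbourhood of any $j$ vertices is $(k-j)$-ECTF).

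\textbf{Step 2 (the counting heart).} The crucial observation is that the $a=k$ case (with no $y_i$'s) says \emph{every} independent set of size $k$ has a common neighbour. Counting pairs $(v,S)$ with $S$ an independent $k$-subset of $N(v)$ therefore gives
\[
\#\{\text{independent $k$-sets}\}\;\le\;\sum_{v\in V}\binom{d(v)}{k}\;\le\;n\binom{\Delta}{k},
\]
while a maximum independent set alone yields $\binom{\alpha(G)}{k}$ independent $k$-sets. Since $\alpha(G)\ge\Delta\ge k$ (the neighbourhood of a maximum-degree vertex is independent), this rearranges to $n\ge(\alpha(G)/\Delta)^{k}$. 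Now I would feed in the fact that triangle-free graphs have large independence number — Shearer's bound gives $\alpha(G)\ge(1-o(1))\,\tfrac{n\log\Delta}{\Delta}$ — to obtain $n\ge\big((1-o(1))\tfrac{n\log\Delta}{\Delta^{2}}\big)^{k}$. When $\Delta$ is of order $\sqrt{n}$, as the Moore bound from Step 1 suggests it should be, the base here is of order $\log n$, so $n\ge(\Omega(\log n))^{k}$, i.e. $\log n\gtrsim k\log\log n$; a careful tracking of the implied constants should then yield $k\le\frac{8\log n}{\log\log n}$.

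\textbf{Main obstacle.} The counting argument degrades badly when $\Delta$ is much larger than $\sqrt n$: for a ``$K_{n/2,n/2}$-like'' graph the inequality $n\ge(\alpha/\Delta)^{k}$ becomes essentially vacuous, and nothing in the extension property obviously forbids $\Delta$ from being, say, linear in $n$. Ruling this regime out is where the real work lies. Here I would try to exploit that a high-degree vertex $v$ simultaneously provides a huge independent set $N(v)$ and very rich ``trace'' data on the $(k-1)$-ECTF graph $G[V\setminus(N(v)\cup\{v\})]$ — every $\le(k-1)$-subset of $N(v)$ is realised as $N(w)\cap N(v)$ for some $w$ — or to iterate the passage to $G[V\setminus(N(v)\cup\{v\})]$; the snag, as the calculation shows, is that one such step shrinks $n$ only by a constant factor, which is exactly the loss that recovers the weak $O(\log n)$ bound rather than $O(\log n/\log\log n)$. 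So the heart of the proof must be a cleverer reduction that, per unit decrease of $k$, loses only a polylogarithmic factor in $n$ (or an entirely different handling of the high-degree case); I expect this, together with the bookkeeping that produces the precise constant $8$, to be the main difficulty.
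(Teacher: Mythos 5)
Your Step 2 is sound as far as it goes, and in the regime $\Delta(G)=O(\sqrt{n})$ it would in fact give the even stronger bound $k\le(1+o(1))\frac{\log n}{\log\log n}$ (with $\Delta\ge\sqrt{n-1}$ from your Moore-bound observation, the base $\frac{n\log\Delta}{\Delta^2}$ is at least about $\tfrac12\log n$). But, as you yourself flag, the argument collapses once $\Delta\gg\sqrt{n\log n}$: there the best available lower bound on $\alpha(G)$ is essentially $\Delta$ itself (the neighbourhood of a maximum-degree vertex), so $\alpha/\Delta$ is only bounded below by a constant and $n\ge(\alpha/\Delta)^k$ is vacuous. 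Nothing in the $k$-ECTF definition caps $\Delta$, and the repair you sketch --- iterating $G\mapsto G[V\setminus(N(v)\cup\{v\})]$ --- loses a constant factor of $n$ per unit decrease of $k$, which, as you correctly note, only recovers the trivial $O(\log n)$ bound. So the proposal is missing its central ingredient, not a technical detail: the entire content of the theorem lies in the case you leave open.

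For comparison, the paper never controls $\Delta(G)$ and never splits on it. It fixes a single independent set $I$ with $|I|\ge\lfloor\sqrt n\rfloor$ and proves a degree-into-$I$ bound only for carefully produced witness vertices. Given a $(k/2)$-tuple $y_1,\dots,y_{k/2}\in I$ not yet ``covered'', the set $B$ of vertices adjacent to a fixed $x_0\in I$ but to none of the $y_i$ is independent (it sits inside $N(x_0)$) and is $(k/2,k/2-1)$-separating for $A\subseteq I$; a covering measure on $B$, combined with a weighted form of the trivial $2^k$ bound (the paper's Lemma~\ref{Weightedtrivial}), yields a common neighbour $w$ of $y_1,\dots,y_{k/2}$ with $\mu(N_B(w))\ge\eps^2$; the expansion lemma (Lemma~\ref{expansionLemma}) then shows $\bigcup_{x\in N_B(w)}N_A(x)$ covers all but a $\frac{2}{k}\log(\eps^{-2})$ fraction of $A$, and triangle-freeness forces $w$ to avoid that whole union, so $w$ is adjacent to few vertices of $I$ and hence covers only an $\alpha^{k/2}$-fraction of the tuples. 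Iterating produces $n+1$ distinct such $w$'s, a contradiction. In effect the ratio doing the work is $|I|/\deg_I(w)\approx k/\log(\eps^{-2})$ for these specific witnesses, rather than the global ratio $\alpha(G)/\Delta(G)$ your counting argument needs --- that localisation is exactly the idea your sketch lacks.
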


One might interpret Theorem~\ref{thm:main} as giving the first concrete evidence that the triangle-free version of the problem is substantially different than the problem without the restriction on triangles. Indeed, with high probability, $G_{n,1/2}$ is $k$-existentially complete with $k \sim c\log n$ and thus essentially matches the trivial bound.

We point out that a closely related question on triangle-free graphs was raised and studied by Erd\H{o}s and Fajtlowicz \cite{EandFaj} and later by Pach \cite{Pach}; for
$k \in \mathbb{N}$, what is the smallest maximum degree in a triangle-free graph $G$ with the property that every independent set of size at most $k$ has a common neighbour.
Little is known about this question except in this case where $k$ is large:  Pach \cite{Pach} gave a classification of graphs where $k \geq n$, that is where \emph{all} independent sets have a common neighbour. This was later later strengthened for $k \geq c\log n$ by Erd\H{o}s and Pach \cite{EandP}.

\section{Proof of Main Theorem}
Given a finite set $X$, we say that $\mu$ is a \emph{probability measure on} $X$ if $\mu$ is a probability measure on the power set of $X$. Of course,
$\mu$ is determined by the values $\mu(\{x\})$ for $x \in X$. For a graph $G = (V(G),E(G))$, and disjoint subsets $X,Y \subseteq V(G)$, let $G[X,Y]$ denote the \emph{induced bipartite graph} on vertex set $X \cup Y$, with bipartition $\{X, Y\}$, and $x \in X $ joined to $y \in Y$ if and only if $xy \in E(G)$. 

Let $G$ be a bipartite graph with vertex partition $\{A, B\}$. For $s,t \in \mathbb{N}$, we say $G$ is $(s,t)$-\emph{separating for} $A$ if, for every pair of disjoint subsets $S,T \subseteq A$ with $|S| \leq s$ and $|T|\leq t$, there exists a vertex $v \in B$ so that $v$ is joined to all of $S$ and none of $T$. 
\paragraph{}
It is easy to see that if $k \in \mathbb{N}$ and $G = (A,B,E)$ is a bipartite graph which is $(k,k)$-separating for $A$, then $|B|\geq 2^k$. The following lemma, gives a strengthened bound when we impose a restriction on the neighbourhoods of vertices in $B$.

\begin{lemma} 
\label{Weightedtrivial}
For $k \in \mathbb{N}$, let $G$ be a bipartite graph with bipartition $\{A, B\}$ with $|A| \geq 2k$, and let $\mu$ be a probability measure on $A$. If $G$ is $(k,0)$-separating for $A$ and $\mu(N(x) ) < \varepsilon $ for each $x \in B$, then $|B| > 1/\varepsilon^k$.
\end{lemma}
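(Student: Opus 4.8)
The plan is a one-line first-moment computation. Sample vertices $a_1,\dots,a_k\in A$ independently, each from the distribution $\mu$, and let $X$ be the number of vertices of $B$ adjacent to all of $a_1,\dots,a_k$, i.e. $X=\bigl|N(a_1)\cap\dots\cap N(a_k)\bigr|$ (all neighbourhoods taken in $G$, so that $N(a_i)\subseteq B$ and $N(x)\subseteq A$). I will bound $X$ from below pointwise and $\EE[X]$ from above, and compare.

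\emph{Upper bound on $\EE[X]$.} By linearity of expectation and independence of the $a_i$,
\[
\EE[X]=\sum_{x\in B}\Pr\bigl[a_i\in N(x)\text{ for all }i\bigr]=\sum_{x\in B}\mu(N(x))^k .
\]
Since $G$ is $(k,0)$-separating for the nonempty set $A$, the set $B$ is nonempty (take $S=\{a\}$, $T=\emptyset$), so the hypothesis $\mu(N(x))<\eps$ for every $x\in B$ gives, strictly, $\EE[X]<|B|\,\eps^k$.

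\emph{Lower bound on $X$.} For every outcome $(a_1,\dots,a_k)$ the set $S=\{a_1,\dots,a_k\}$ is a nonempty subset of $A$ of size at most $k$, so applying the $(k,0)$-separating property with this $S$ and $T=\emptyset$ yields $v\in B$ joined to all of $a_1,\dots,a_k$; hence $X\geq 1$ always, and in particular $\EE[X]\geq1$. Combining the two bounds gives $1\leq\EE[X]<|B|\,\eps^k$, i.e. $|B|>1/\eps^k$.

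I do not expect a genuine obstacle here; the only thing to be mildly careful about is that repetitions among the sampled $a_i$ are harmless, since one applies the separating property to the underlying set $\{a_1,\dots,a_k\}$, whose size is at most $k$ regardless, so the hypothesis $|A|\geq 2k$ is not actually used in this argument (it is presumably recorded for the later application of the lemma). If one prefers a deterministic proof, the same idea derandomises: greedily pick $a_1,\dots,a_k\in A$ with $a_i$ minimising $|N(a)\cap B_{i-1}|$ over $a\in A$, where $B_0=B$ and $B_i=B_{i-1}\cap N(a_i)$; the identity $\sum_{x\in B_{i-1}}\mu(N(x))=\sum_{a\in A}\mu(\{a\})\,|N(a)\cap B_{i-1}|<\eps|B_{i-1}|$ forces $|B_i|<\eps|B_{i-1}|$, while $(k,0)$-separation keeps each $B_i$ nonempty, so $1\leq|B_k|<\eps^k|B|$.
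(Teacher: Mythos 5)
Your argument is the paper's own proof in only slightly different clothing: the paper samples $x_1,\dots,x_k$ from $\mu$, notes that the $(k,0)$-separating property forces the event ``some $x\in B$ contains all the $x_i$ in its neighbourhood'' to have probability $1$, and union-bounds this by $\sum_{x\in B}\mu(N(x))^k\leq|B|\eps^k$, which is exactly your first-moment computation. Both your probabilistic version and your observation that $|A|\geq 2k$ is not actually needed are correct; the derandomised variant is a nice extra but not a different method.
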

\begin{proof}
Sample the points $x_1, \ldots x_k \in A $ independently at random and according to the distribution $\mu$. Then
\begin{align*}
 1 =&\,\,\, \mathbb{P}\left( x_1, \ldots, x_k \in N(x) \text{ for some } x \in B \right) \\
 \leq&\,\sum_{x \in B} \mathbb{P}\left( x_1, \ldots x_k \in N(x) \right) \\ 
 =&\,\sum_{x \in B} \mu(N(x))^k \leq |B|\varepsilon^k ,
\end{align*} 
thus completing the proof. 
\end{proof}
\paragraph{}
For, $s,t \in \mathbb{N}$, let $G = (A, B, E)$ be a bipartite graph that is $(s,t)$-separating for $A$. We now define a measure on $B$ that measures how well a given subset of $B$ covers the $s$-tuples of $A$. In particular, define the \emph{covering measure} $\mu_{G,s,A}$, with respect to $G$, by defining a way of sampling it: first sample $X_1,\ldots,X_s \in A$ independently and uniformly from $A$. Then, uniformly at random, choose a vertex among all vertices $v \in B$ so that $X_1,\ldots,X_s \in N(v)$. A key property of this measure is that for every $B' \subseteq B$, we have that
\begin{equation} \label{equ:BoundOnmu} 
\mu_{G,s,A}(B') \leq  \PP(X_1,\ldots,X_s \in N(x), \text{ for some } x \in B') .
\end{equation}  
Here $\mathbb{P}$ denotes the uniform measure on $A$ for the $X_1,\ldots,X_s$.
The following lemma says that if $G = (A,B,E)$ is $(s,0)$-separating for $A$ and a set $B' \subset B$ is given large mass by $\mu_{G,s,A}$, then the neighbourhoods of $x \in B'$ ``expand'' and collectively cover many vertices of $A$.  

\begin{lemma} \label{expansionLemma} 
For $k \in \mathbb{N}$, let $G = (A, B , E)$ be a bipartite graph which is $(k,0)$-separating for $A$ and let $\mu = \mu_{G,k,A}$ be the covering measure defined on $B$. If $B' \subseteq B$ has $\mu(B') > \eps$ for some $\eps >0$, then 
\[ \left| \bigcup_{x \in B'} N(x) \right| \geq \left(1 - \frac{1}{k}\log \left(\eps^{-1} \right)\right) |A| .\]
\end{lemma}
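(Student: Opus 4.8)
The plan is to prove the contrapositive: if the neighbourhoods of $B'$ fail to cover a set $A_0 \subseteq A$ of size $|A_0| = \delta |A|$ with $\delta > \frac{1}{k}\log(\eps^{-1})$, then we get a contradiction with $\mu(B') > \eps$. The point is that every vertex $x \in B'$ has $N(x) \cap A_0 = \emptyset$, so $N(x) \subseteq A \setminus A_0$. First I would recall the sampling description of $\mu = \mu_{G,k,A}$: we draw $X_1,\ldots,X_k$ independently and uniformly from $A$, and then pick a uniformly random $v \in B$ with $X_1,\ldots,X_k \in N(v)$ (this is well-defined since $G$ is $(k,0)$-separating, so such a $v$ always exists). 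By inequality~\eqref{equ:BoundOnmu},
\[
\mu(B') \leq \PP\big(X_1,\ldots,X_k \in N(x) \text{ for some } x \in B'\big).
\]

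Next I would bound that probability. If some $x \in B'$ has $X_1,\ldots,X_k \in N(x)$, then in particular none of the $X_i$ lies in $A_0$, since $N(x) \cap A_0 = \emptyset$ for every $x \in B'$. Hence
\[
\PP\big(X_1,\ldots,X_k \in N(x) \text{ for some } x \in B'\big) \leq \PP(X_1,\ldots,X_k \notin A_0) = \left(1 - \frac{|A_0|}{|A|}\right)^k = (1-\delta)^k,
\]
using independence and uniformity of the $X_i$. Combining, $\eps < \mu(B') \leq (1-\delta)^k \leq e^{-\delta k}$, so $\delta k < \log(\eps^{-1})$, i.e. $\delta < \frac{1}{k}\log(\eps^{-1})$. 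Taking $A_0$ to be exactly the complement of $\bigcup_{x \in B'} N(x)$, we conclude that
\[
|A| - \left| \bigcup_{x \in B'} N(x)\right| = |A_0| = \delta|A| < \frac{1}{k}\log(\eps^{-1})\,|A|,
\]
which rearranges to the claimed inequality.

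I do not expect any real obstacle here; the lemma is essentially a clean restatement of Lemma~\ref{Weightedtrivial}'s probabilistic idea, now applied to the event that the samples avoid the uncovered set rather than to a union bound over $B$. The only points requiring a little care are: (i) checking that the covering measure is well-defined (needs $(k,0)$-separation, which is hypothesised), so the ``uniformly random $v$'' step never fails; and (ii) making sure the inequality~\eqref{equ:BoundOnmu} is applied in the right direction — it bounds $\mu(B')$ from above by a probability computed in the uniform model for the $X_i$, which is exactly what we need. One should also note the edge case $\eps \geq 1$ (or more precisely $\log(\eps^{-1}) \geq k$), where the claimed bound is vacuous or trivial, so we may assume $\eps < 1$ throughout.
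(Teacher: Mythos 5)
Your proof is correct and is essentially the paper's own argument: your $\delta$ (the fraction of $A$ left uncovered) is exactly the paper's $\eta$, and both proofs bound $\PP(X_1,\ldots,X_k \in N(x) \text{ for some } x \in B')$ above by $(1-\delta)^k \leq e^{-\delta k}$ and then combine with inequality~(\ref{equ:BoundOnmu}) and take logarithms. The ``contrapositive'' framing is only cosmetic; nothing further is needed.
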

\begin{proof}
Write $\left| \bigcup_{x \in B'} N(x) \right| = (1 - \eta)|A|$ for some $0 < \eta < 1$. Then if $X_1,\ldots,X_k$ are sampled independently and uniformly from $A$, we have
\begin{align} \label{equ:inExpLemma}
\begin{split}
	& \mathbb{P}(X_1, \ldots ,X_k \in N(x) \text{ for some } x \in B') \\
	\leq\, \,& \mathbb{P}\big( X_1, \ldots, X_k \in \bigcup_{ x \in B'} N(x)\big) \\
	\leq\, \,& (1-\eta)^k \leq e^{-k \eta}. 
\end{split}
\end{align} 
Now apply the observation at~(\ref{equ:BoundOnmu}) to (\ref{equ:inExpLemma}) to obtain the inequality
\[ \eps < \mu(B') \leq \mathbb{P}\left( X_1, \ldots ,X_k \in N(x) \text{ for some } x \in B' \right) \leq e^{-k \eta} . \]
Taking logarithms gives $\eta < \frac{1}{k}\log\left( \varepsilon^{-1} \right)$, as desired.\end{proof}

We also require a basic fact about triangle-free graphs. 

\begin{lemma} A triangle-free graph on $n$ vertices contains an independent set of size $\geq \lfloor \sqrt{n} \rfloor$
\end{lemma}
\begin{proof} If $G$ contains a vertex of degree at least $\lfloor \sqrt{n} \rfloor$ then the neighbourhood of this vertex is an independent set and we are done. Otherwise, all neighbourhoods are of size at most $\lfloor \sqrt{n} \rfloor - 1$. In this latter case we may greedily construct a proper colouring of $G$ with at most $\sqrt{n}$ colours. There will be a colour-class of size at least $\sqrt{n}$.
\end{proof}
We are now in a position to give the proof of our main theorem. 
\begin{proof}[Proof of Theorem~\ref{thm:main}]
Suppose that $G$ is a $2k$-ECTF graph on $n$ vertices with $k \geq  \frac{4\log n }{\log \log n }$. To reduce clutter, let $\eps = 4(\log\log n)^{-1}$ so that $k \geq  \eps \log n $. Fix an independent set $I\subseteq V(G)$ with $|I| \geq \lfloor \sqrt{n} \rfloor$ and choose $x_0 \in I$. Then set $J = I \setminus \{x_0 \}$. We define a procedure that will discover a collection of more than $n$ distinct vertices in $G$, thus giving a contradiction. Let us set
$\alpha =  \frac{3}{k}\log (\eps^{-2}) $ and note the inequalities
\begin{equation} \label{equ:BoundOnAlpha} \alpha^{-k/2} > n , 
\end{equation} and
\begin{equation} \label{equ:LowerboundAlpha} \frac{2}{k} \log (\eps^{-2}) + \frac{k}{\sqrt{n} - 2} \leq \frac{3}{k}\log (\eps^{-2}) = \alpha, 
\end{equation} which hold for $n$ sufficiently large. 

\paragraph{}
We prove the following statement by induction on $t \in [0,n+1]$: for each $t \in [0,n+1]$ we may find a vertex $ w_t \in V(G)$, and a set $L_t \subseteq J^{\frac{k}{2}}$ so that the following conditions hold.
\begin{enumerate}
\item  The vertices $w_1,\ldots,w_t$ are distinct.
\item  If $(v_1,\ldots,v_{k/2}) \in L_t$, then $v_1,\ldots,v_{k/2}$ are not all contained in any of the neighbourhoods $\{ N(w_i) \}_{i=1}^t$. That is, 
\[ (v_1,\ldots,v_{k/2}) \not\in \bigcup_{i=1}^t (N(w_i)) ^{k/2}. \]
\item  We have $|L_t| \geq \left( 1 - t \alpha^{k/2} \right)|J|^{k/2}$.
\end{enumerate}
For the basis step, set $L_0 = J^{k/2}$. Now assume that we have defined distinct vertices $w_1,\ldots,w_{t-1}$ and a set $L_{t-1}$ satisfying the above. 
We show that we may find appropriate $w_t$ and $L_t$.
\paragraph{}
Note that $|L_{t-1}| \geq 1$, as $|L_{t-1}| \geq |J|^{k/2}(1 - (t-1)\alpha^{k/2}) \geq |J|^{k/2}(1 - n\alpha^{k/2})>0$, as $\alpha^{-k/2} > n$, by the inequality at (\ref{equ:BoundOnAlpha}). So we may fix $y_1, \ldots , y_{k/2} \in J$ so that $(y_1, \ldots ,y_{k/2}) \in L_{t-1}$. Define $B \subseteq V(G)$ to be the collection of vertices in $G$ that are adjacent to $x_0$ and not adjacent to any of $y_1,\ldots,y_{k/2}$. Note that since each vertex in $B$ joins to $x_0$, $B$ is an independent set. Now put $A = I \setminus \{ x_0, y_1,\ldots,y_{k/2} \}$ and consider $G[A,B]$. Observe that $G[A,B]$ is $(k/2,k/2-1)$-separating for $A$ and therefore $|B| \geq 2^{k/2-1}$, by the trivial bound. Let $\mu = \mu_{G[A,B],k/2,A}$ be the covering measure defined on $B$, with respect to the bipartite graph $G[A,B]$.
\paragraph{}
Define $Y$ to be the set of vertices in $G$ that \emph{are} joined to \emph{all} of $y_1,\ldots,y_{k/2}$. 
Note that the graph $G[B,Y]$ is $(k/2,k/2)$-separating for 
$B$, as there are no edges between $y_1,\ldots,y_{k/2}$ and $B$ and $B$ is an independent set in $G$. We now claim that there exists a vertex $w \in Y$ with $\mu(N_B(w)) > \varepsilon^2$. Suppose to the contrary that $\mu(N_B(x)) < \eps^2 $ for all $x \in Y$. Since $G[B,Y]$ is $(k/2,k/2)$-separating for $B$, we may apply Lemma~\ref{Weightedtrivial} to learn that $|Y| > \frac{1}{\varepsilon^k} = n$, which is a contradiction.\\
\paragraph{}
So we may choose some $w \in Y$ with  $\mu(N_B(w)) \geq \eps^2 $ and apply Lemma~\ref{expansionLemma} to learn that 
\begin{equation} \label{equ:expansion} \left| \bigcup_{x \in N_B(w)} N_A(x) \right| \geq \left(1 - \frac{2}{k}\log \left( \varepsilon^{-2} \right)\right)|A|.
\end{equation} The key here is that $w$ is not adjacent to any of the vertices in the union on the left hand side of (\ref{equ:expansion}), as this would create a triangle. 
Thus, (\ref{equ:expansion}) tells us that $w$ is adjacent to at most $ \frac{2|A|}{k}\log \left( \varepsilon^{-2} \right)$ vertices in $A$ and
thus $w$ is adjacent to at most $\frac{2|A|}{k}\log \left( \varepsilon^{-2} \right) + k/2 $ vertices in $J$.
As $|A| < |J|$, $w$ covers at most 
\begin{equation} \label{equ:BoundOnCoveredTuples} 
	|J|^{k/2}\left( \frac{2}{k} \log (\eps^{-2})  + \frac{k}{2|J|} \right) ^{k/2}  \leq  |J|^{k/2}\alpha^{k/2}
\end{equation} 
$k/2$-tuples in $J^{k/2}$. Here we have used the inequality $|J| = |I| - 1 \geq \lfloor \sqrt{n} \rfloor - 1$ and the inequality at (\ref{equ:LowerboundAlpha}). So we define $w_t = w$ and set
\[ L_t = L_{t-1} \setminus \left\lbrace (v_1,\ldots,v_{k/2}) : v_1,\ldots,v_{k/2} \in N(w) \cap J \right\rbrace.
\] By induction and the bound at (\ref{equ:BoundOnCoveredTuples}) we have $|L_t| \geq |J|^{\frac{k}{2}}\left( 1 - t\alpha^{k/2}\right)$. Finally, we note that $w_t$ must
be distinct from $w_1,\ldots,w_{t-1}$ as $w_t$ is joined to all of $y_1,\ldots,y_{k/2}$ which is not true of any of the $w_1,\ldots,w_{t-1}$, by the induction hypothesis. 
\paragraph{}
So, by induction, we have constructed $n+1$ distinct vertices in a $n$-vertex graph; a contradiction. This implies that there are no $l$-ECTF graphs with $l = 2k \geq \frac{8\log n}{\log\log n}$, thus completing the proof of Theorem~\ref{thm:main}. 
\end{proof}
\section{Acknowledgements}
We should like to thank B\'{e}la Bollob\'{a}s for introducing us to the problem of Cherlin. The second named author would like to thank Trinity College Cambridge for support. He would also like to thank the Cambridge Combinatorics group for their hospitality.

\bibliographystyle{plain}
\bibliography{ECTFbib}

\end{document}